\newcommand{\cc}{\mathbb{C}}
\renewcommand{\sl}{\mathrm{sl}}
\newtheoremstyle{thm}{15 pt}{10 pt}{\itshape}{}{\bfseries}{.}{.5em}{}
\newtheorem{theorem}{Theorem}
\numberwithin{theorem}{section}
\newtheorem{corollary}[theorem]{Corollary}
\newtheorem{question}[theorem]{Question}
\newtheoremstyle{rem}{15 pt}{10 pt}{\normalfont}{}{\bfseries}{.}{.5em}{}
\theoremstyle{rem}
\newtheorem{remark}[theorem]{Remark}
\begin{document}

\thispagestyle{empty}
\title[Quasipositive minimal braids]{Minimal braid representatives of quasipositive links}
\author[K. Hayden]{Kyle Hayden} \address{Boston College, Chestnut Hill, MA 02467} \email{kyle.hayden@bc.edu}

\maketitle

\begin{abstract}
We show that every quasipositive link has a quasipositive minimal braid representative, partially resolving a question posed by Orevkov.  These quasipositive minimal braids are used to show that the maximal self-linking number of a quasipositive link is bounded below by the negative of the minimal braid index, with equality if and only if the link is an unlink. This implies that the only amphicheiral quasipositive links are the unlinks, answering a question of Rudolph's. 
\end{abstract}

\section{Introduction}

Quasipositive links in $S^3$ were introduced by Rudolph \cite{rudolph:qp-alg} and defined in terms of special braid diagrams, the details of which we recall below. These links possess a variety of noteworthy features. Perhaps most strikingly, results of Rudolph \cite{rudolph:qp-alg} and Boileau-Orevkov \cite{bo:qp} show that quasipositive links are precisely those links which arise as transverse intersections of the unit sphere $S^3 \subset \cc^2$ with complex plane curves $f^{-1}(0)\subset \cc^2$, where $f$ is a non-constant polynomial. The hierarchy of braid positive, positive, strongly quasipositive, and quasipositive links interacts in compelling ways with conditions such as fiberedness \cite{E-VHM:fibered,hedden:pos}, sliceness \cite{rudolph:qp-obstruction}, homogeneity \cite{baader:homogeneity}, and symplectic/Lagrangian fillability \cite{bo:qp,positivity}, and quasipositive links have well-understood behavior with respect to invariants such as the four-ball genus, the maximal self-linking number, and the Ozsv\'ath-Szab\'o concordance invariant $\tau$ \cite{hedden:pos}. For a different perspective, we can view quasipositive braids as a monoid in the mapping class group of a disk with marked points, where they lie inside the contact-geometrically important monoid of right-veering diffeomorphisms; see \cite{E-VHM:monoids} for more details.

The braid-theoretic description of quasipositivity is as follows: A braid is called \emph{quasipositive} if it is the closure of a word 
$$\prod_i \omega_i \sigma_{j_i} \omega_i^{-1},$$
where $\omega_i$ is any word in the braid group and $\sigma_{j_i}$ is a positive standard generator. A link is then called \emph{quasipositive} if it has a quasipositive braid representative. However, an arbitrary braid representative of a quasipositive link need not be a quasipositive braid. Along these lines, Orevkov \cite{orevkov:markov} posed the following question:

\begin{question}\label{ques:orevkov}  Let $\mathcal{L}$ be a quasipositive link and $\beta$ a minimal braid index representative of $\mathcal{L}$. Is  $\beta$ is quasipositive?
\end{question}

Partial resolutions to this question have appeared in \cite{E-VHM:fibered} and \cite{fel-krc:twists}. In particular, Etnyre and Van Horn-Morris showed in \cite{E-VHM:fibered} that the answer to Question~\ref{ques:orevkov} is \emph{yes} for fibered strongly quasipositive links. (For contrast, we point out that the answer to the analogue of Question~\ref{ques:orevkov} for positive braids is \emph{no}, as Stoimenow has provided examples of braid positive knots that have no positive minimal braid representative in \cite{stoimenow:crossing}. See also \cite[\S1]{stoimenow:3-braids}.) The main purpose of this note is to provide another partial answer to Question~\ref{ques:orevkov}.

\begin{theorem} \label{thm:main}
Every quasipositive link has a quasipositive minimal braid index representative.
\end{theorem}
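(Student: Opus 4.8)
The plan is to start from \emph{some} quasipositive braid representative $\beta$ of $\mathcal{L}$ — which exists by definition — and to simplify it one strand at a time, all the while preserving quasipositivity, until it attains the minimal number of strands. The first point is that we always know \emph{which way} the simplification must go. By Rudolph's work (via Kronheimer--Mrowka's proof of the local Thom conjecture), the Bennequin surface of a quasipositive braid on $n$ strands built from $k$ bands has Euler characteristic $n-k=\chi_4(\mathcal{L})$, the maximal Euler characteristic of a smooth surface in $B^4$ bounding $\mathcal{L}$; hence $\mathrm{sl}(\beta)=k-n=-\chi_4(\mathcal{L})=\overline{\mathrm{sl}}(\mathcal{L})$, i.e.\ every quasipositive braid is a maximal self-linking number representative of its closure. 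Combined with the generalized Jones conjecture (Dynnikov--Prasolov, LaFountain--Menasco), which tells us that minimal braid index representatives also realize $\overline{\mathrm{sl}}(\mathcal{L})$ and share a common writhe $c=b(\mathcal{L})+\overline{\mathrm{sl}}(\mathcal{L})$, the Theorem reduces to the following claim: if $\beta$ is a quasipositive braid on $n>b(\mathcal{L})$ strands, then $\mathcal{L}$ admits a quasipositive braid representative on fewer than $n$ strands.

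Next I would record a rigidity observation that sharply constrains the available Markov moves. A negative destabilization raises the self-linking number by $2$, so a quasipositive braid — already at the maximum — cannot be negatively destabilized, even after conjugation. Since conjugation and positive stabilization trivially preserve quasipositivity (a positive stabilization simply appends one more band $\sigma_n$), the only move that can reduce the strand number of a quasipositive braid is a positive destabilization. So it suffices to show: (a) a quasipositive braid on $n>b(\mathcal{L})$ strands can, after conjugation and possibly some auxiliary positive stabilizations, be put in the form $\gamma\sigma_{n'-1}$ with $\gamma\in B_{n'-1}$, so that a positive destabilization toward $b(\mathcal{L})$ strands becomes available; and (b) positive destabilization preserves quasipositivity. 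Granting (a) and (b), one induces on the strand number to conclude.

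For (a) I would appeal to the Markov theorem without stabilization of Birman--Menasco, which simplifies $\beta$ to a minimal representative through moves that never increase the strand number; by the rigidity observation every destabilization in such a sequence is forced to be positive, and one must check that the remaining moves (exchange moves and the finite list of block--strand moves) either preserve quasipositivity or can be routed around. For (b), the key is to pass to the four-dimensional picture: a positive destabilization is a transverse isotopy, so the destabilized braid $\gamma$ still bounds the symplectic (holomorphic) surface coming from the quasipositive band presentation of $\beta$; re-realizing $\gamma$ itself, on the correct number of strands, as the braided boundary of that surface — in the spirit of Rudolph's and Boileau--Orevkov's correspondence between quasipositive braids and pieces of complex curves — should recover a quasipositive band presentation for $\gamma$.

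I expect the main obstacle to be precisely this interface between combinatorics and geometry in (a) and (b): showing that the strand-reducing simplification can be carried out using only moves compatible with the quasipositive band structure — that one never needs to pass through a braid which is genuinely ``less positive'' than its neighbors — and, dually, that the quasipositive structure can be transported across a destabilization rather than merely across a transverse isotopy. Controlling the Birman--Menasco moves, or bypassing them by working directly with a braid foliation on the quasipositive Bennequin surface and locating a destabilizing disk adapted to the band structure, is where the real content lies.
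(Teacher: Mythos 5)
Your overall skeleton matches the paper's in several respects: quasipositive braids realize $\overline{\sl}(\mathcal{L})$, the Generalized Jones Conjecture then pins the $(w,n)$-data of a quasipositive braid to the right edge of the cone over a minimal representative, and consequently any destabilization applied to such a braid is forced to be positive. That positive destabilization preserves quasipositivity is a theorem of Orevkov \cite{orevkov:markov}, which you should cite rather than re-derive; your four-dimensional sketch would at best show that the closure of the destabilized braid is a quasipositive \emph{link}, not that the destabilized braid itself admits a quasipositive \emph{word}, which is exactly the nontrivial content. The real gap, however, is the step you label (a), and you have located it yourself: the Markov Theorem Without Stabilization does not take $\beta_+$ down to minimal braid index using only destabilizations and exchange moves. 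Its templates include G-flypes, G-exchange moves, and other block--strand moves that are not known to preserve quasipositivity, and your rigidity observation rules out only negative destabilizations --- it says nothing about these other moves, which need not change the self-linking number at all. "Routing around" them is not something your proposal actually does.

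The missing idea is to replace the MTWS by LaFountain and Menasco's proof of the Generalized Jones Conjecture \cite{lf-m:jones}. Their Proposition~1.1 produces, from $\beta_+$ and a minimal braid index representative $\beta_0$, stabilizations $\beta_+'$ (all positive, hence still quasipositive) and $\beta_0'$ (all negative) that cobound embedded annuli; their Proposition~3.2 then brings this pair to a common $(w,n)$-point using \emph{only} braid isotopy, destabilization, and exchange moves. The cone picture forces the destabilizations of $\beta_+'$ to be positive and forces the common endpoint to be the apex of the cone, i.e.\ minimal braid index. Finally, exchange moves preserve quasipositivity because an exchange move factors as one positive stabilization, one positive destabilization, and conjugations (Birman--Wrinkle \cite{birman-wrinkle}); this is the second fact you flagged but did not supply. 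With those two inputs your outline closes up into the paper's proof.
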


This claim follows quickly from some recent results in the theory of braid foliations, namely LaFountain and Menasco's proof of the Generalized Jones Conjecture in \cite{lf-m:jones}. Our method of proof is similar to that of Etnyre and Van Horn-Morris in \cite{E-VHM:fibered};  their later paper \cite{E-VHM:monoids} offers an updated perspective.

A few simple consequences follow from Theorem~\ref{thm:main}. First, by considering the self-linking number of a quasipositive minimal braid index representative of a quasipositive link, we obtain a lower bound on the maximal self-linking number $\overline{\sl}$ in terms of the minimal braid index $b$:

\begin{theorem}\label{thm:sl}
If $\mathcal{L}$ is a quasipositive link, then $$\overline{\sl}(\mathcal{L}) \geq - b(\mathcal{L}),$$
with equality if and only $\mathcal{L}$ is an unlink.
\end{theorem}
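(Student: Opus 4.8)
The plan is to combine Theorem~\ref{thm:main} with the standard self-linking formula for braid closures. Recall that if $\beta$ is a braid on $n$ strands with exponent sum (writhe) $e(\beta)$, then its closure $\widehat\beta$, taken with the braid orientation, is a transverse link whose self-linking number is $\sl(\widehat\beta) = e(\beta) - n$. By Theorem~\ref{thm:main}, the link $\mathcal{L}$ has a quasipositive braid representative $\beta$ on $b := b(\mathcal{L})$ strands; write $\beta = \prod_{i=1}^{k}\omega_i \sigma_{j_i}\omega_i^{-1}$ for suitable braid words $\omega_i$ and some integer $k \ge 0$. Since the exponent sum is a homomorphism $B_b \to \mathbb{Z}$ and each factor $\omega_i\sigma_{j_i}\omega_i^{-1}$ has exponent sum $+1$, we have $e(\beta) = k \ge 0$. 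Hence
$$\overline{\sl}(\mathcal{L}) \;\ge\; \sl(\widehat\beta) \;=\; e(\beta) - b \;=\; k - b \;\ge\; -b(\mathcal{L}),$$
which is the asserted inequality.

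For the equality statement I would argue both implications. If $\mathcal{L}$ is the $n$-component unlink, then $b(\mathcal{L}) = n$ (the trivial braid on $n$ strands is a minimal representative), so the inequality above reads $\overline{\sl}(\mathcal{L}) \ge -n$; on the other hand, applying the Bennequin inequality (or the slice-Bennequin inequality) to the Seifert surface consisting of $n$ disjoint disks gives $\overline{\sl}(\mathcal{L}) \le -n$, so $\overline{\sl}(\mathcal{L}) = -n = -b(\mathcal{L})$. Conversely, suppose $\overline{\sl}(\mathcal{L}) = -b(\mathcal{L})$. Using the quasipositive minimal representative $\beta$ from above, the chain $k - b \le \overline{\sl}(\mathcal{L}) = -b$ forces $k \le 0$, hence $k = 0$. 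But then the defining quasipositive word for $\beta$ is the empty word, so $\beta$ is the trivial braid on $b$ strands and $\mathcal{L} = \widehat\beta$ is the $b$-component unlink.

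Essentially every step is bookkeeping once Theorem~\ref{thm:main} is in hand, so I do not anticipate a genuine obstacle; the only external input is the classical Bennequin-type inequality used to evaluate $\overline{\sl}$ on unlinks in the equality case. The one point that needs care is the equality analysis: one must observe that the factor count $k$ is not merely a feature of a particular word but is forced to equal $e(\beta)$, so that $e(\beta) = 0$ genuinely means the quasipositive word is empty and $\beta$ is trivial, rather than only that $\beta$ has vanishing exponent sum.
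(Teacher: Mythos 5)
Your proof is correct and follows essentially the same route as the paper: apply Bennequin's formula to the quasipositive minimal braid index representative furnished by Theorem~\ref{thm:main} and use that a quasipositive braid has nonnegative writhe, with writhe zero exactly when the braid is trivial. The only (harmless) difference is that the paper invokes the sharpness of the slice-Bennequin inequality to get $\overline{\sl}(\mathcal{L})=\sl(\widehat\beta)$ on the nose, which settles both directions of the equality case at once, whereas you use only $\overline{\sl}(\mathcal{L})\ge \sl(\widehat\beta)$ and therefore supply a separate Bennequin-inequality argument for the unlink direction.
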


The calculation underlying Theorem~\ref{thm:sl} also lets us resolve an earlier question of Lee Rudolph's from \cite[Problem~9.2]{morton:problems}:

\begin{question}\label{ques:rudolph}
Are there any amphicheiral quasipositive links other than the unlinks?
\end{question}

At the time this question was asked, it was already known that nontrivial strongly quasipositive knots were chiral; see \cite[Remark 4]{rudolph:positive} for a discussion of precedent results. Additional evidence for a negative answer came in the form of strong constraints on invariants of amphicheiral quasipositive links (including their being slice \cite{wu:deformations}).  We confirm that the answer to Rudolph's question is \emph{no}.

\begin{corollary}\label{cor:chiral}
If a link $\mathcal{L}$ and its mirror $m(\mathcal{L})$ are both quasipositive, then $\mathcal{L}$ is an unlink. In particular, an amphicheiral quasipositive link is an unlink.
\end{corollary}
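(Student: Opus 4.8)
The plan is to combine Theorem~\ref{thm:sl} with a general upper bound on the quantity $\overline{\sl}(\mathcal{L}) + \overline{\sl}(m(\mathcal{L}))$, so that the two bounds pinch and force equality in Theorem~\ref{thm:sl}. Since an amphicheiral link satisfies $m(\mathcal{L}) \simeq \mathcal{L}$, the ``in particular'' clause is immediate from the first sentence, so it suffices to treat the case where $\mathcal{L}$ and $m(\mathcal{L})$ are both quasipositive.

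The general bound I would use is
\[
\overline{\sl}(\mathcal{L}) + \overline{\sl}(m(\mathcal{L})) \;\leq\; -2\,b(\mathcal{L}),
\]
valid for \emph{every} link $\mathcal{L}$, and I would derive it from the HOMFLY polynomial $P_\mathcal{L}(v,z)$ and the Morton--Franks--Williams inequality. Recall that $\overline{\sl}(\mathcal{L}) = \max\{\,e(\beta) - n : \beta \in B_n,\ \widehat{\beta} = \mathcal{L}\,\}$, where $e(\beta)$ is the exponent sum. The MFW inequality gives $e(\beta) - n + 1 \leq \min\deg_v P_\mathcal{L}$ for each braid representative, hence $\overline{\sl}(\mathcal{L}) \leq \min\deg_v P_\mathcal{L} - 1$; it also bounds the braid index by half the $v$-breadth, $\max\deg_v P_\mathcal{L} - \min\deg_v P_\mathcal{L} \leq 2\bigl(b(\mathcal{L}) - 1\bigr)$. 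Since mirroring sends $P_\mathcal{L}(v,z)$ to $P_{m(\mathcal{L})}(v,z) = P_\mathcal{L}(v^{-1}, z)$ (up to a sign in $z$ irrelevant to $v$-degrees), we get $\overline{\sl}(m(\mathcal{L})) \leq \min\deg_v P_{m(\mathcal{L})} - 1 = -\max\deg_v P_\mathcal{L} - 1$. Adding the two estimates and applying the braid-index bound yields $\overline{\sl}(\mathcal{L}) + \overline{\sl}(m(\mathcal{L})) \leq \min\deg_v P_\mathcal{L} - \max\deg_v P_\mathcal{L} - 2 \leq -2\bigl(b(\mathcal{L}) - 1\bigr) - 2 = -2\,b(\mathcal{L})$, as claimed. (One should double-check the MFW conventions and the behavior of $P$ under mirroring; this is the only delicate bookkeeping, and it is classical.)

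Now assume $\mathcal{L}$ and $m(\mathcal{L})$ are both quasipositive. Theorem~\ref{thm:sl} applied to $\mathcal{L}$ gives $\overline{\sl}(\mathcal{L}) \geq -b(\mathcal{L})$, and applied to $m(\mathcal{L})$ gives $\overline{\sl}(m(\mathcal{L})) \geq -b(m(\mathcal{L})) = -b(\mathcal{L})$, using that the braid index is invariant under mirroring. Combining with the displayed general bound forces
\[
-2\,b(\mathcal{L}) \;\leq\; \overline{\sl}(\mathcal{L}) + \overline{\sl}(m(\mathcal{L})) \;\leq\; -2\,b(\mathcal{L}),
\]
so equality holds throughout and in particular $\overline{\sl}(\mathcal{L}) = -b(\mathcal{L})$. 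The equality clause of Theorem~\ref{thm:sl} then shows $\mathcal{L}$ is an unlink, completing the proof.

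I expect the only real obstacle to be assembling the correct sharp form of the Morton--Franks--Williams inequality — namely the simultaneous use of its bound on the Bennequin (self-linking) number and its bound on the braid index — together with the mirror symmetry $\min\deg_v P_{m(\mathcal{L})} = -\max\deg_v P_\mathcal{L}$; everything downstream of that is a one-line sandwiching argument feeding into Theorem~\ref{thm:sl}.
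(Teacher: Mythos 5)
Your proposal is correct, but it reaches the conclusion by a genuinely different route than the paper. The paper works directly with writhe: by Theorem~\ref{thm:main} choose a \emph{quasipositive} minimal braid index representative $\beta$ of $\mathcal{L}$, so $w(\beta)\geq 0$; the mirror $m(\beta)$ is a minimal representative of $m(\mathcal{L})$, and since $m(\mathcal{L})$ is also quasipositive (and, by the Generalized Jones Conjecture, every minimal braid index representative of a fixed link has the same writhe), $-w(\beta)=w(m(\beta))\geq 0$; hence $w(\beta)=0$, which forces $\beta$ to be trivial. You instead pinch $\overline{\sl}(\mathcal{L})+\overline{\sl}(m(\mathcal{L}))$ between the lower bound $-2b(\mathcal{L})$ coming from two applications of Theorem~\ref{thm:sl} and an upper bound $-2b(\mathcal{L})$ extracted from the Morton--Franks--Williams inequality; your bookkeeping is right, since adding $\overline{\sl}(\mathcal{L})\leq \min\deg_v P_{\mathcal{L}}-1$ to $\overline{\sl}(m(\mathcal{L}))\leq -\max\deg_v P_{\mathcal{L}}-1$ and invoking $\max\deg_v P_{\mathcal{L}}-\min\deg_v P_{\mathcal{L}}\leq 2(b(\mathcal{L})-1)$ does yield $-2b(\mathcal{L})$, and the equality clause of Theorem~\ref{thm:sl} then finishes the argument. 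The trade-off is that your route imports HOMFLY machinery external to the paper, whereas the paper's proof uses only tools it has already assembled; in exchange, your intermediate inequality $\overline{\sl}(\mathcal{L})+\overline{\sl}(m(\mathcal{L}))\leq -2b(\mathcal{L})$ is a clean statement valid for \emph{all} links and makes the role of the equality case of Theorem~\ref{thm:sl} completely transparent. (In fact, granting the Generalized Jones Conjecture, this sum equals $-2b(\mathcal{L})$ exactly for every link, since a minimal braid representative $\beta_0$ gives $\overline{\sl}(\mathcal{L})=w(\beta_0)-b(\mathcal{L})$ and $\overline{\sl}(m(\mathcal{L}))=-w(\beta_0)-b(\mathcal{L})$; this observation would let you run your sandwiching argument without any appeal to the HOMFLY polynomial, and it is essentially the paper's proof in disguise.)
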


After recalling the necessary background in Section~\ref{sec:background}, we supply proofs for the above results in Section~\ref{sec:minimal}.

\subsection*{Acknowledgements.} The author thanks John Baldwin, Peter Feller, and Eli Grigsby for several stimulating conversations and for introducing him to LaFountain and Menasco's proof of the Generalized Jones Conjecture.

\section{Background}\label{sec:background}

The Generalized Jones Conjecture, first confirmed by Dynnikov-Prasolov \cite{d-p:jones}, relates the writhe $w$ and braid index $n$ of braids within a given link type.

\begin{theorem}[Generalized Jones Conjecture, \cite{d-p:jones}]
Let $\beta$ and $\beta_0$ be closed braids with the same link type $\mathcal{L}$, where $n(\beta_0)$ is minimal for $\mathcal{L}$. Then there is an inequality $$|w(\beta)-w(\beta_0)| \leq n(\beta)-n(\beta_0).$$ \end{theorem}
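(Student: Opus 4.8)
The plan is to reduce the inequality to a bookkeeping statement along a destabilization sequence produced by braid-foliation techniques, and then to identify the single nonformal ingredient. By the Markov Theorem Without Stabilization of Birman--Menasco, any closed braid representative of $\mathcal{L}$ is carried to a representative of minimal braid index by a finite sequence of braid isotopies, exchange moves, flypes (more generally, ``template'' moves), and destabilizations, along which the braid index is never increased. Along such a sequence starting at $\beta$ and ending at a minimal-index braid $\beta_0'$, each destabilization changes the pair $(n,w)$ by $(-1,-1)$ or $(-1,+1)$, while braid isotopies, exchange moves, and flypes fix $(n,w)$; since destabilizations are the only moves that change $n$, there are exactly $n(\beta)-n(\beta_0')=n(\beta)-n(\beta_0)$ of them, and each shifts $w$ by $\pm 1$. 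Hence $|w(\beta)-w(\beta_0')|\le n(\beta)-n(\beta_0)$. Running the same procedure between two minimal-index representatives, the braid index must stay at its minimum throughout, so only index-preserving moves occur and $w$ does not change; this shows $w(\beta_0')=w(\beta_0)=:w_0$, and the theorem follows.

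A convenient refinement is to track instead the self-linking number $\sl(\beta)=w(\beta)-n(\beta)$ of the transverse link carried by $\beta$: a positive destabilization preserves $\sl$, a negative destabilization raises it by $2$, and index-preserving moves preserve it, so $\sl$ is nondecreasing along the sequence. This gives $\sl(\beta)\le\sl(\beta_0')=w_0-n(\beta_0)$, that is $w(\beta)\le w_0+(n(\beta)-n(\beta_0))$; applying the argument to the mirror link $m(\mathcal{L})$, whose braid representatives are the mirrors of those of $\mathcal{L}$ (same strand number, opposite writhe, minimal exactly when the original is), yields the matching lower bound $w(\beta)\ge w_0-(n(\beta)-n(\beta_0))$. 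Since $\sl(\beta)\le\overline{\sl}(\mathcal{L})$ always, this vantage point also shows the theorem to be essentially equivalent to the assertion that $\overline{\sl}(\mathcal{L})$ is realized by a minimal-index braid, which is what makes it useful for Theorem~\ref{thm:sl} and Corollary~\ref{cor:chiral}.

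The genuine difficulty is concealed in the phrase ``template moves.'' The index-preserving moves that appear in the Markov Theorem Without Stabilization are, in general, more elaborate than exchange moves and flypes, and the claim that each of them preserves the writhe is exactly (a version of) the original Jones conjecture, so it cannot be assumed. Verifying it requires the full apparatus of braid foliations in the Birman--Menasco tradition: a careful catalog of how the writhe can change as one simplifies the braid foliation on the embedded surfaces implementing these moves, and a proof that no net change occurs. This analysis is the substance of LaFountain and Menasco's proof; Dynnikov and Prasolov reach the same conclusion by an independent combinatorial argument phrased in terms of rectangular (grid) diagrams. I expect essentially all of the work to lie in that case analysis, with the bookkeeping above being the easy part.
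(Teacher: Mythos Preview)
The paper does not prove this statement: it is quoted in Section~\ref{sec:background} as a background theorem, attributed to Dynnikov--Prasolov \cite{d-p:jones} (with LaFountain--Menasco \cite{lf-m:jones} cited elsewhere for an independent proof), and no argument is supplied. There is therefore no in-paper proof to compare your proposal against.

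As a standalone sketch, your write-up is honest about its own status. The bookkeeping in the first two paragraphs is correct and reduces the statement to the assertion that the index-preserving moves in the Markov Theorem Without Stabilization preserve writhe, and you rightly identify in the third paragraph that this assertion is essentially the theorem itself and requires the full machinery of \cite{d-p:jones} or \cite{lf-m:jones}. (A small caution: already for flypes in the Birman--Menasco sense, writhe-preservation is not automatic and is part of what must be established, so the gap opens a bit earlier than your third paragraph suggests.) In short, you have written an accurate roadmap to where the difficulty lies rather than a proof, which is appropriate given that the paper itself treats the result as a black box.
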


Recall Bennequin's formula for the self-linking number of a braid $\beta$: $$\sl(\beta)=w(\beta)-n(\beta).$$  It follows from the Generalized Jones Conjecture, Bennequin's formula, and the transverse Alexander theorem  that a minimal braid index representative of $\mathcal{L}$ achieves the maximal self-linking number among all transverse representatives of $\mathcal{L}$, denoted $\overline{\sl}(\mathcal{L})$. 
For any braid $\beta$ representing a link type $\mathcal{L}$, we can plot the pair $(w(\beta),n(\beta))$ in a plane. The \emph{cone} of $\beta$ is the collection of all pairs $(w,n)$ realized by braids which are stabilizations of $\beta$; see Figure~\ref{fig:cone_background} for an example. If $\beta_0$ is a minimal braid index representative of $\mathcal{L}$, we see that the right edge of its cone consists of all pairs $(w,n)$ corresponding to braids achieving the maximal self-linking number of $\mathcal{L}$.

\begin{figure}[h!] \centering \def\svgwidth{200pt} 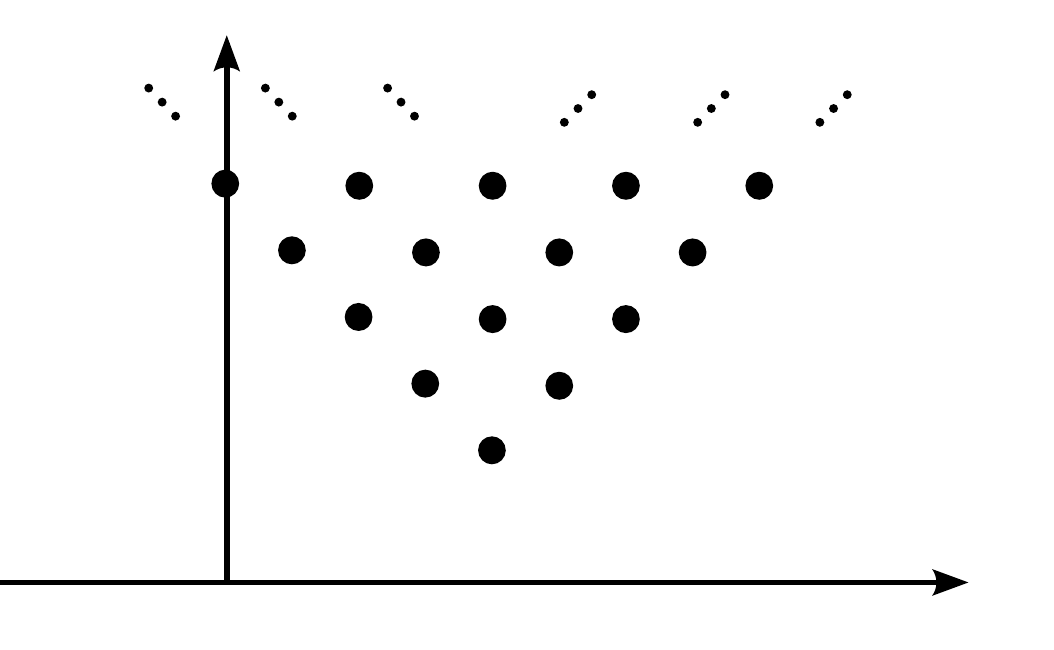 \caption{The cone of a braid $\beta$ with $(w(\beta),n(\beta))=(4,2)$.}\label{fig:cone_background} \end{figure}

The other tool central to the proof of Theorem~\ref{thm:main} is due to Orevkov and concerns braid moves that preserve quasipositivity.

\begin{theorem}[\cite{orevkov:markov}]\label{thm:stab}
Suppose the braids $\beta$ and $\beta'$ are related by positive (de)stabilization. Then $\beta$ is quasipositive if and only if $\beta'$ is quasipositive.
\end{theorem}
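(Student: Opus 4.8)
I would separate the ``if and only if'' into its two implications. After conjugating the braid word (which manifestly preserves quasipositivity, since conjugating a quasipositive word $\prod_i\omega_i\sigma_{j_i}\omega_i^{-1}$ by $\gamma$ yields $\prod_i(\gamma\omega_i)\sigma_{j_i}(\gamma\omega_i)^{-1}$), and using the evident symmetry of the braid group, I may assume $\beta\in B_n$ and $\beta'=\iota(\beta)\sigma_n\in B_{n+1}$, where $\iota\colon B_n\hookrightarrow B_{n+1}$ is the standard inclusion. The forward direction is then immediate: if $\beta=\prod_{i=1}^{k}\omega_i\sigma_{j_i}\omega_i^{-1}$ is quasipositive, then $\beta'=\big(\prod_{i=1}^{k}\iota(\omega_i)\,\sigma_{j_i}\,\iota(\omega_i)^{-1}\big)\sigma_n$ exhibits $\beta'$ as a product of $k+1$ conjugates of positive standard generators of $B_{n+1}$, the last factor being $\sigma_n$ itself, so $\beta'$ is quasipositive. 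Thus all of the content is in the reverse implication: a \emph{positive destabilization} of a quasipositive braid is again quasipositive.

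For that, suppose $\beta'=\iota(\beta)\sigma_n$ is quasipositive and fix a factorization $\beta'=\gamma_1\gamma_2\cdots\gamma_m$ in $B_{n+1}$ in which each $\gamma_i$ is a conjugate of a positive standard generator. The plan is to modify this factorization, using only operations that preserve both the product $\beta'$ and the property that every factor is a conjugate of a positive standard generator, until it takes the form $\gamma_1'\cdots\gamma_{m-1}'\,\sigma_n$ with $\gamma_1',\dots,\gamma_{m-1}'\in\iota(B_n)$. Granting this, cancelling the terminal $\sigma_n$ and applying $\iota^{-1}$ writes $\beta$ as a product of $m-1$ conjugates of positive standard generators of $B_n$ (one must check the small point that a conjugate of a standard generator of $B_{n+1}$ lying in $\iota(B_n)$ is again a conjugate of a standard generator of $B_n$, which is clear from the band picture), so $\beta$ is quasipositive. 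The operations available for the modification are the Hurwitz moves $(\dots,\gamma_i,\gamma_{i+1},\dots)\mapsto(\dots,\gamma_i\gamma_{i+1}\gamma_i^{-1},\gamma_i,\dots)$ and their inverses, which leave the product unchanged, together with global conjugation of the whole factorization.

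The crux — and the step I expect to be the real obstacle — is this normal-form claim: that such moves always suffice to consolidate the interaction of the new strand $n+1$ with the remaining strands into a single terminal factor equal to $\sigma_n$. A priori several bands $\gamma_i$ may involve strand $n+1$ (the conjugating words can route it arbitrarily), and even a single such band need not be $\sigma_n$ on the nose but only a band joining strand $n+1$ to some strand $s\le n$ along a complicated path; one must argue, by induction on $m$ or on a suitable measure of how strand $n+1$ is embedded, that these bands can always be merged and straightened. This is exactly where the geometry of quasipositive \emph{braided surfaces} (in the sense of Rudolph) is the natural language: $\beta'$ bounds a braided surface in $D^2\times D^2$ built from $n+1$ disk sheets and $m$ positive bands, and being a positive stabilization means the sheet corresponding to strand $n+1$ can be isotoped off to leave a quasipositive braided surface on $n$ sheets bounded by $\beta$. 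Translating the factorization argument into a statement about simplifying such surfaces (equivalently, their charts) is where I would expect the genuine work of the proof to reside.
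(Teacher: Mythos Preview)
The paper does not give its own proof of this statement: Theorem~\ref{thm:stab} is quoted from Orevkov's paper \cite{orevkov:markov} and used as a black box, with only a remark on conventions. So there is no in-paper argument to compare your proposal against.

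As for the proposal itself, it is not a proof but an outline. You correctly dispose of the forward (stabilization) direction, and you correctly isolate the entire difficulty in the reverse (destabilization) direction. However, the step you label ``the crux'' --- that Hurwitz moves and global conjugation suffice to bring an arbitrary quasipositive factorization of $\iota(\beta)\sigma_n$ into the form $\gamma_1'\cdots\gamma_{m-1}'\,\sigma_n$ with all $\gamma_i'\in\iota(B_n)$ --- is exactly the content of Orevkov's theorem, and you explicitly leave it unproven. Your instinct that the braided-surface/chart picture is the right setting is sound and is indeed in the spirit of Orevkov's argument, but nothing in your write-up establishes the needed normal form. (A smaller point: the claim that a conjugate of a generator of $B_{n+1}$ lying in $\iota(B_n)$ is automatically a conjugate of a generator of $B_n$ is not quite as immediate as ``clear from the band picture'' suggests --- the arc of the band may wind around the $(n{+}1)$-st puncture --- though it does follow once one has the normal form.) In short: the genuine gap is the one you yourself flag, and closing it \emph{is} the theorem.
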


\begin{remark} In \cite{orevkov:markov}, Orevkov views an $n$-stranded braid as an isotopy class of $n$-valued functions $f: [0,1] \to \cc$ such that each of $f(0)$ and $f(1)$ equals $\{1,2,\ldots,n\}\subset \cc$.  A braid is then quasipositive if its equivalence class contains a representative that can be expressed as a product of conjugates of the standard generators. For us, it is more convenient to consider \emph{closed} braids (up to isotopy through closed braids). Two closed braids are equivalent if and only if they can be expressed as closures of conjugate open braids. Since quasipositivity is a property of conjugacy classes of open braids,  Theorem~\ref{thm:stab} holds equally well for closed braids.
 \end{remark}

\section{Quasipositive minimal braids}\label{sec:minimal}

We proceed to the proof of the of the main result, namely that every quasipositive link has a quasipositive minimal braid representative.

\begin{proof}[Proof of Theorem~\ref{thm:main}]
Let $\mathcal{L}$ be a quasipositive link with a minimal braid index representative $\beta_0$ and a quasipositive braid representative $\beta_+$.  Since the slice-Bennequin inequality is sharp for quasipositive links (\cite{rudolph:qp-obstruction}, c.f. \cite{hedden:pos}), $\beta_+$ achieves the maximal self-linking number for $\mathcal{L}$. As noted above, it follows that $(w(\beta_+),n(\beta_+))$ lies along the right edge of the cone of $\beta_0$. The braids $\beta_0$ and $\beta_+$ have the same link type, so \cite[Proposition~1.1]{lf-m:jones} implies that there are braids $\beta_0'$ and $\beta_+'$ obtained from $\beta_0$ and $\beta_+$ by negative and positive stabilization, respectively, such that $\beta_0'$ and $\beta_+'$ cobound embedded annuli. Note that $\beta_0'$ and $\beta_+'$ lie along the left and right edges of the cone, respectively, as depicted on the left side of Figure~\ref{fig:cone_main}. We also note that $\beta_+'$ is quasipositive since it is obtained from $\beta_+$ by positive stabilization. 

Next, as in (the proof of) \cite[Proposition~3.2]{lf-m:jones}, we can find braids $\beta_0''$ and $\beta_+''$  obtained from $\beta_0'$ and $\beta_+'$ by braid isotopy, destabilization, and exchange moves such that $w(\beta_+'')=w(\beta_0'')$ and $n(\beta_+'')=n(\beta_0'')$. 
We claim that $\beta_+''$ has minimal braid index (as does $\beta_0''$). Indeed, since $\beta_0'$ and $\beta_+'$ lie on the left and right edges of the cone of $\beta_0$, the destabilizations applied to them must be negative and positive, respectively. Given this and the fact that exchange moves preserve writhe and braid index, we see that $\beta_0''$ and $\beta_+''$ must also lie on the left and right edges of the cone of $\beta_0$, respectively. But since these braids occupy the same $(w,n)$-point, they must lie where the edges of the cone meet. As depicted on the right side of Figure~\ref{fig:cone_main}, this implies that $\beta_0''$ and $\beta_+''$ have minimal braid index.

Finally, we show that $\beta_+''$ is quasipositive. As noted above, any destabilizations of $\beta_+'$ must be positive, and these preserve quasipositivity by Theorem~\ref{thm:stab}. An exchange move also preserves quasipositivity, since it can be expressed as a combination of one positive stabilization, one positive destabilization, and a number of conjugations; see \cite[Figure~8]{birman-wrinkle} for a proof.
\end{proof}

\begin{figure}[t] \centering \def\svgwidth{\linewidth} 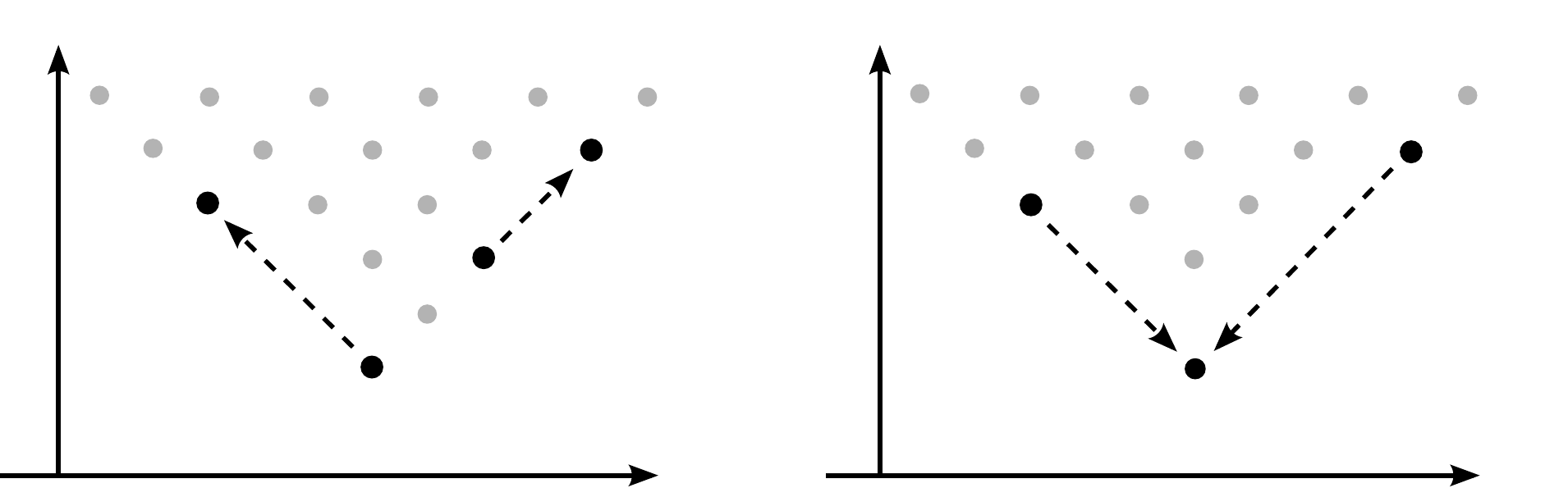 \caption{On the left, $\beta_0'$ and $\beta_+'$ are obtained from $\beta_0$ and $\beta_+$ by negative and positive stabilization, respectively. Then, on the right, $\beta_0''$ and $\beta_+''$ are obtained from $\beta_0'$ and $\beta_+'$ by negative and positive destabilization, respectively.} \label{fig:cone_main}\end{figure}

\begin{remark} 
The question of whether or not \emph{all} minimal braid index representatives of a quasipositive link are quasipositive remains open.  The answer is  seen to be \emph{yes} for transversely simple link types: Beginning with a quasipositive braid representative, the transverse Markov theorem implies that any minimal braid index representative can be related to it by positive (de)stabilization, which preserves quasipositivity. By the same reasoning, the answer to Question~\ref{ques:orevkov} is \emph{yes} for any link type that has a unique transverse class achieving its maximal self-linking number (but is not necessarily transversely simple).  This is the case for fibered strongly quasipositive links, as shown by Etnyre and Van Horn-Morris. But it fails to hold even for non-fibered strongly quasipositive links; as pointed out by Etnyre and Van Horn-Morris, there are infinite families of 3-braids found by Birman and Menasco in \cite{b-m:simplicity} which are (strongly) quasipositive and of minimal braid index but not transversely isotopic.
\end{remark}

\begin{remark}
As pointed out by Eli Grigsby, the proof of Theorem~\ref{thm:main} can be mirrored to show that any property of closed braids that is \begin{enumerate}
\item preserved under transverse isotopy and
\item satisfied by at least one braid representative of $\mathcal{L}$ with maximal self-linking number
\end{enumerate}
is also satisfied by at least one minimal braid index representative of $\mathcal{L}$.\end{remark}

Now we obtain the lower bound in Theorem~\ref{thm:sl} by applying Bennequin's formula to a quasipositive minimal braid.

\begin{proof}[Proof of Theorem~\ref{thm:sl}] 
Recall that a quasipositive braid always achieves the maximal self-linking number and that the writhe of a quasipositive braid is always greater than or equal to zero, with equality if and only if the braid is trivial. Applying this to a quasipositive minimal braid index representative $\beta$ of $\mathcal{L}$, we have
\begin{align*}
\overline{\sl}(\mathcal{L}) &= \sl(\beta) \\
&= w(\beta)-n(\beta) \\
& \geq -b(\mathcal{L}),
\end{align*}
where equality holds if and only if the writhe of $\beta$ is zero,  i.e.\ if and only if $\beta$ is the trivial braid.
\end{proof}

Finally, we prove the corollary that resolves Question~\ref{ques:rudolph}.

\begin{proof}[Proof of Corollary~\ref{cor:chiral}]
If $\beta$ is a minimal braid index representative of a link $\mathcal{L}$, then the mirror $m(\beta)$ is minimal for $m(\mathcal{L})$.  As in the proof of Theorem~\ref{thm:sl}, $\mathcal{L}$ and $m(\mathcal{L})$ being quasipositive implies that $w(\beta)\geq0$ and $-w(\beta)=w(m(\beta))\geq 0$. These together imply $w(\beta)=0$, and thus $\mathcal{L}$ is an unlink.\end{proof}

\bibliographystyle{abbrv}
\bibliography{biblio}

\end{document}